\newtheorem{theorem}{Theorem}
\newtheorem{proposition}[theorem]{Proposition}
\begin{document}

\baselineskip=17pt

\title[Unipotent subgroups of real semisimple Lie groups]{On the conjugacy 
of maximal unipotent subgroups of real semisimple Lie groups}

\author[H. Azad]{Hassan Azad}

\address{Department of Mathematics and Statistics,
King Fahd University of Petroleum and Minerals,
Dhahran 31261, Saudi Arabia}

\email{hassanaz@kfupm.edu.sa}

\author[I. Biswas]{Indranil Biswas}

\address{School of Mathematics, Tata Institute of Fundamental
Research, Homi Bhabha Road, Bombay 400005, India}

\email{indranil@math.tifr.res.in}

\date{}

\subjclass[2000]{14L30, 20G20, 22E15, 17B81}

\begin{abstract}
The existence of closed orbits of real algebraic groups on real
algebraic varieties is established. As an application, it is shown
that if $G$ is a real reductive linear group with
Iwasawa decomposition $G\,=\, KAN$, then every unipotent
subgroup of $G$ is conjugate to a subgroup of $N$.
\end{abstract}

\maketitle

The principal aim of this note is to give a new proof of the following result.

\begin{proposition}\label{prop1}
Let $G$ be a connected real reductive linear group, and let
$G\, =\, KAN$ be an Iwasawa decomposition. Then any unipotent subgroup
$U$ of $G$ is conjugate to a subgroup of $N$.
In particular, all maximal unipotent subgroups are conjugate in $G$.
\end{proposition}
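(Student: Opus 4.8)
The plan is to realize $U$ as a subgroup of a minimal parabolic subgroup and then to push it into the unipotent radical, using the closed‑orbit theorem as the one substantial input. Recall that $N$ is exactly the unipotent radical of the minimal parabolic subgroup $P\,=\,MAN$ with $M\,=\,Z_K(A)$, and that $X\,=\,G/P$ is a real projective, hence compact, algebraic variety on which $G$ acts algebraically. First I would replace $U$ by its Zariski closure: by Kolchin's theorem any subgroup of unipotent elements stabilizes a full flag, so this closure is again unipotent, and its group of real points is a connected, simply connected nilpotent Lie group. Since conjugating the closure into $N$ conjugates $U$ into $N$ as well, there is no loss in assuming that $U$ is a real unipotent algebraic group.

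Next I would apply the existence of closed orbits established above to the action of $U$ on $X$, producing a closed orbit $O\,=\,U\cdot x_0$. The decisive point is that a closed orbit of a unipotent group on a projective variety is a single point. Indeed, as a closed subset of the compact variety $X$ the orbit $O$ is compact, whereas $O\,\cong\,U/U_{x_0}$ is the real‑point set of a quotient of a unipotent group by a closed (hence again unipotent, and connected) subgroup, so it is diffeomorphic to a Euclidean space $\mathbf{R}^{d}$; a compact Euclidean space forces $d\,=\,0$. Equivalently, $U/U_{x_0}$ is an affine variety that is simultaneously complete, hence a point. Either way $U\,=\,U_{x_0}$, so $x_0$ is a fixed point of $U$.

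A fixed point $x_0\,=\,g_0 P$ means precisely that $U\,\subseteq\,g_0 P g_0^{-1}$, a minimal parabolic subgroup whose unipotent radical is $g_0 N g_0^{-1}$. To conclude I would compose with the Levi projection $g_0 P g_0^{-1}\,\to\,g_0 MA g_0^{-1}$, an algebraic homomorphism with kernel $g_0 N g_0^{-1}$. The image of $U$ is a group of unipotent elements inside the conjugate of $MA$; but $MA$ is the product of the compact group $M$ and the split torus $A$, so each of its elements is semisimple and $MA$ has no nontrivial unipotent element. Hence the image of $U$ is trivial, that is $U\,\subseteq\,g_0 N g_0^{-1}$, and therefore $g_0^{-1} U g_0\,\subseteq\,N$, which is the assertion of Proposition \ref{prop1}.

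The final clause is then formal. If $U$ is maximal unipotent, then so is its conjugate $g_0^{-1} U g_0$, and being contained in the unipotent group $N$ it must coincide with $N$; this shows both that $N$ is itself a maximal unipotent subgroup and that every maximal unipotent subgroup is $G$‑conjugate to $N$. The one genuine obstacle in this scheme is the passage from a closed orbit to a fixed point, and that is exactly where the unipotence of $U$—equivalently, the Euclidean (or affine) nature of its homogeneous spaces—is indispensable; once the closed‑orbit theorem is available, every remaining step is routine.
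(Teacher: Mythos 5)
Your proposal is correct and takes essentially the same route as the paper: both apply the closed-orbit result (Proposition \ref{lem1}) to the action of $U$ on a compact real homogeneous space, note that a closed orbit of a unipotent group is simultaneously compact and a cell (hence a single point), and conclude that a conjugate of $U$ lies in the stabilizing subgroup, whose unipotent elements all belong to $N$. The only minor differences are that the paper acts on $G/H$ with $H$ the Zariski closure of $AN$ (a choice that makes algebraicity and compactness immediate, avoiding the structure theory needed to see $P=MAN$ as the real points of a parabolic with $G/P$ projective), and it finishes via the fact that $H$ is a finite extension of $AN$ rather than via your Levi projection $P\to MA$.
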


The novelty of the proof is that it is entirely geometric and relatively elementary.

Proposition \ref{prop1} is a special case of a result in Borel-Tits \cite[p.
126]{BT}. It is also proved in Mostow \cite[Theorem 2.1]{Mo}, and
in Onishchik-Vinberg \cite[p. 276, Theorem 7]{OV} and Vinberg \cite{Vi}. The
proposition seems not be as well known as it should be. For example, it is not
given in standard references like \cite{He}, or \cite{Kn}, although the one
dimensional case is proved in \cite[p. 431]{He} using the
Jacobson-Morozov theorem. (The reference in \cite{BT} was pointed out by T. N.
Venkataramana after a first draft of the note had been written.) 

The proof in \cite{Mo}, which is explained in \cite[\S~2.1--2.7]{Mo} is done by 
induction on the dimension of $G$, using properties of chambers associated to 
split diagonalizable subalgebras of the Lie algebra of $G$. The key step in the 
proof in \cite{OV} is that a connected triangular
subgroup of $\text{GL}(V)$, where $V$ is a finite dimensional real vector
space, has a fixed point in any invariant closed subset of the flag variety for
$V$ (this is deduced using Lemma 3 in \cite[p. 276]{OV}).

A knowledge of solvable subgroups is of importance in theoretical physics,
as explained in the papers of Patera, Winternitz and Zassenhaus
\cite{PWZ1}, \cite{PWZ2}, where
the authors have determined all maximal solvable connected subgroups of the
classical real groups. The classification of solvable subgroups is also of
great practical use in the reduction theory of differential equations.

Proposition \ref{prop1} can be proved using the ideas in the proof of Theorem
3.1 of \cite{AL}. However, we give a proof using known elementary results from 
real algebraic geometry instead of recreating them ab initio.

Let $\mathbb L$ be a connected affine complex algebraic group defined over
$\mathbb R$.
Let $L\,\subset\, {\mathbb L}({\mathbb R})$ be the connected component,
containing the identity element, of the locus of real points ${\mathbb 
L}({\mathbb R})$. Let $\mathbb X$ be an irreducible complex algebraic
variety defined over $\mathbb R$.
Let $X$ be a connected component of ${\mathbb X}(\mathbb R)$, the set of real 
points of $\mathbb X$.

\begin{proposition}\label{lem1}
Let $\phi\, :\, L\times X \,\longrightarrow\, X$ be an action of $L$ on $X$
satisfying the following condition: there is an algebraic action
${\mathbb L}\times{\mathbb X}\,\longrightarrow\, {\mathbb X}$ defined
over $\mathbb R$ that induces $\phi$. Then $L$ has a closed orbit in $X$.
\end{proposition}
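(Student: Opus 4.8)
The plan is to select an $L$-orbit of minimal dimension and show that minimality alone forces it to be closed. The crucial structural input is that the $L$-orbits in $X$ are semialgebraic sets, which lets me invoke the elementary fact from real algebraic geometry that the frontier of a semialgebraic set has strictly smaller dimension than the set itself.

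First I would record that everything stays within the semialgebraic category. The real locus $\mathbb{L}(\mathbb{R})$ is a real algebraic set, and any connected component (in the Euclidean topology) of a semialgebraic set is again semialgebraic; hence $L$ is semialgebraic, and likewise $X$. The map $\phi$ is the restriction to real points of the regular action $\mathbb{L}\times\mathbb{X}\to\mathbb{X}$ defined over $\mathbb{R}$, so $\phi$ is a semialgebraic map. Consequently, for each $x\in X$ the orbit $L\cdot x = \phi(L\times\{x\})$ is the image of a semialgebraic set under a semialgebraic map, hence semialgebraic by the Tarski--Seidenberg theorem. Its Euclidean closure $\overline{L\cdot x}$ and its frontier $\partial(L\cdot x) := \overline{L\cdot x}\setminus (L\cdot x)$ are then semialgebraic as well. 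Since $L$ is connected and $X$ is a connected component of $\mathbb{X}(\mathbb{R})$, the orbit lies in $X$, and as $X$ is closed in $\mathbb{X}(\mathbb{R})$ the closure may be taken inside $X$.

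Next I would run the minimality argument. The function $x\mapsto \dim(L\cdot x)$ takes values in $\{0,1,\dots,\dim X\}$, so it attains a minimum; choose $x_0\in X$ realizing it. Because $L$ acts on $X$ by homeomorphisms and preserves $L\cdot x_0$, it preserves $\overline{L\cdot x_0}$ and hence preserves $\partial(L\cdot x_0)$, so the frontier is a union of $L$-orbits. The frontier inequality for semialgebraic sets gives $\dim \partial(L\cdot x_0) < \dim (L\cdot x_0)$. If the frontier were nonempty it would contain some $y$, whence $L\cdot y \subseteq \partial(L\cdot x_0)$ and $\dim(L\cdot y)\le \dim\partial(L\cdot x_0) < \dim(L\cdot x_0)$, contradicting the minimality of $\dim(L\cdot x_0)$. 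Therefore $\partial(L\cdot x_0)=\emptyset$, i.e. $L\cdot x_0 = \overline{L\cdot x_0}$ is a closed orbit in $X$.

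The main obstacle is justifying the two semialgebraic inputs rather than the dimension count, which is routine once they are in place: one must confirm that restricting to a single component $L$ of $\mathbb{L}(\mathbb{R})$ and a component $X$ of $\mathbb{X}(\mathbb{R})$ preserves semialgebraicity, and that the orbit map is genuinely semialgebraic so Tarski--Seidenberg applies; the frontier inequality is then a standard theorem on semialgebraic sets. An alternative route through the complex orbit $\mathbb{L}\cdot x$, which is locally closed in $\mathbb{X}$, would force me to relate the real locus of that complex orbit to the single orbit $L\cdot x$ and to control how connected components and real-form phenomena intervene; the semialgebraic argument is preferable precisely because it bypasses these complications and remains elementary.
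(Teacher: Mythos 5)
Your proof is correct, and it takes a genuinely different route from the paper's. The paper follows the same skeleton (pick an orbit $L\cdot x_0$ of minimal dimension $d$ and show its frontier is empty), but it justifies the dimension drop by passing to the complexification: it invokes the theorem that the boundary of an orbit of a complex algebraic group consists of orbits of strictly smaller dimension (Steinberg, Borel), together with the fact, cited from Silhol, that the real points of a smooth irreducible $d$-dimensional complex variety defined over $\mathbb R$ form a $d$-dimensional real manifold, so that real orbit dimensions can be compared with complex ones. Your argument never leaves the real category: semialgebraicity of orbits via Tarski--Seidenberg plus the frontier inequality $\dim(\overline{S}\setminus S)<\dim S$ replaces both ingredients at once. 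This buys two things. First, it sidesteps exactly the real-form subtlety you flag at the end: in the paper's setup, a boundary point $y$ of $L\cdot x_0$ could a priori be a real point of the \emph{complex} orbit ${\mathbb L}\cdot x_0$ lying in a different $L$-orbit of the same dimension $d$, and ruling this out requires an extra argument (e.g., that the finitely many $L$-orbits inside $({\mathbb L}\cdot x_0)({\mathbb R})$ are all open, hence closed, in it) which the paper elides; your frontier inequality applies directly to the set $L\cdot x_0$ itself, so no such case analysis arises. Second, your hypotheses are effectively weaker: you only need the action map to be semialgebraic, for which the existence of a complex algebraic action defined over $\mathbb R$ is merely one sufficient condition. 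The one point to keep an eye on in both proofs is that $\mathbb X$ is an abstract variety; the semialgebraic machinery (like Silhol's theorem in the paper's version) is cleanest when $\mathbb X$ is affine or quasi-projective, which is all that the application to $G/H$ requires.
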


\begin{proof}
If $\mathbb L$ is a complex affine algebraic group acting algebraically on a 
complex algebraic variety $\mathbb X$, then it is known that $\mathbb L$ 
has a closed orbit in $\mathbb X$. Indeed, this is an immediate consequence of 
the fact that the boundary of any orbit consists of orbits of strictly smaller 
dimension (cf. \cite[p. 19]{St}, \cite[p. 53, Proposition 1.8]{Bo}). As shown 
below, the proposition can be derived from this fact.

Consider the dimensions of all the $L$ orbits in $X$.
Let $d$ be the smallest among these. Take a point $x_0\, \in\, X$
such that the orbit $L\cdot x_0$ is of dimension $d$. We will show that
this orbit $L\cdot x_0\, \subset\, X$ is closed.

Consider the action ${\mathbb L}\times{\mathbb X}\,\longrightarrow\,{\mathbb 
X}$ defined over $\mathbb R$ inducing $\phi$. For an irreducible smooth complex 
algebraic variety $\mathbb Y$, of dimension $d$, defined over $\mathbb R$, each
connected component of the set of real points of $\mathbb Y$ is a real manifold 
of dimension $d$ \cite[p. 8, (1.14)]{Si}. Hence the complex dimension of the orbit 
${\mathbb L} \cdot x_0$, which is a smooth irreducible 
algebraic variety defined over $\mathbb R$, is $d$. Therefore, the 
boundary of ${\mathbb L}\cdot x_0$ consists of orbits of dimensions strictly 
smaller than $d$.
Consequently, if the orbit $L\cdot x_0$ is not closed, for any point
$y\, \in\, \overline{L\cdot x_0} \setminus L\cdot x_0$ in the boundary, the 
orbit $L\cdot y$ will have dimension strictly smaller than $d$. But the
dimension of any $L$ orbit in $X$ is at least $d$. Hence we conclude that
the orbit $L\cdot x_0$ is closed.
\end{proof}

A real algebraic group $G$ is, for the purposes of this paper, a closed
connected subgroup of ${\rm GL}(n,{\mathbb R})$ such that the connected
Lie subgroup
$G^{\mathbb C}$ of ${\rm GL}(n,{\mathbb C})$ whose Lie algebra is
$\text{Lie}(G)+\sqrt{-1}\cdot\text{Lie}(G)$ is an affine algebraic
subgroup of ${\rm GL}(n,{\mathbb C})$, where $\text{Lie}(G)$ is the
Lie algebra of $G$. Every unipotent subgroup of ${\rm GL}(n,{\mathbb R})$
(meaning, a connected subgroup consisting of unipotent matrices) is algebraic.

\medskip
\noindent
\textbf{Proof of Proposition \ref{prop1}.}\, A real semisimple linear group is
real algebraic,
because its complexification is generated by unipotent subgroups
(see \cite[Ch.~3, \S~9.8]{Bou}).
Therefore, the group $G$ in Proposition \ref{prop1} is real algebraic.
Let $G\,=\, KAN$ be the Iwasawa decomposition of $G$. Let $H$ be the
Zariski closure of $AN$ in $G$; so $H$ is a finite extension of $AN$.

As noted above, the unipotent subgroup $U$ of $G$ is algebraic.
The approach we have taken is that a connected subgroup $G$ of ${\rm 
GL}(n,{\mathbb R})$ is 
real algebraic if the connected subgroup of ${\rm GL}(n,{\mathbb C})$ with Lie 
algebra $\text{Lie}(G)\oplus \sqrt{-1}\cdot \text{Lie}(G)$ is affine algebraic.
It is not completely obvious --- but standard --- that the complexification of
$A$ is an algebraic torus. The reason is that $A$ and a maximal torus of the
centralizer of $A$ in $K$ give a Cartan subgroup of $G$, hence its
complexification is a maximal torus, so if we restrict the Cartan
involution followed by taking inverse to this Cartan subgroup, then the
fixed point set has $A$ as a connected component.

Consider the 
action of $U$ on $G/H$. By Proposition \ref{lem1}, there is an element $\xi_0\, 
\in\, G$ such that the orbit $U\xi_{0}H/H$ in $G/H$ is closed. Since $G/H$ is 
compact, we conclude that the closed orbit $U\xi_{0}H/H$ is compact.

On the other hand, being an orbit of a unipotent group, $U\xi_{0}H/H\, \subset
\, G/H$ must be a cell. Therefore, $U\xi_{0}H/H$ is a point. Hence
$$
\xi^{-1}_{0} U\xi_0\, \subset\, H\, .
$$
Since the only unipotent elements of $H$ are in $N$, this implies that
$\xi^{-1}_{0} U\xi_0\, \subset\, N$. This completes the proof of
Proposition \ref{prop1}.$\hfill{\Box}$
\medskip

We thank Ernest Vinberg for suggesting that the proof of the
following result be also given along the lines of the preceding proof.

\begin{proposition}{\rm [Vinberg-Mostow, \cite{Vi}, \cite{Mo}].}\label{prop3}
Let $G\,=\, KAN$ be as in Proposition \ref{prop1} with
$G\, \subset\, {\rm GL}(V)$, where $V$ is a finite dimensional real vector
space. Let $S$ be a connected solvable subgroup of $G$ with all real
eigenvalues. Then $S$ is conjugate to a subgroup of $AN$.
\end{proposition}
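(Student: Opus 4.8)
The plan is to mimic the geometric strategy of the proof of Proposition~\ref{prop1}, replacing the flag-variety/coset-space $G/H$ by a suitable homogeneous space on which the closed-orbit result of Proposition~\ref{lem1} applies and on which the solvable group $S$ must have a fixed point. The hypothesis that $S$ is connected solvable with all real eigenvalues is precisely the condition guaranteeing that $S$ is conjugate into a minimal parabolic: the Lie algebra $\mathrm{Lie}(S)$ is a real triangulable Lie algebra, so by Lie's theorem over $\mathbb{R}$ (applicable because the eigenvalues are real) there is a full flag in $V$ stabilized by $S$. The first step, then, is to record that $S$ is an algebraic subgroup of $G$ in the sense of the paper: its complexification is a connected solvable algebraic group, being generated by the exponentials of a triangulable real Lie algebra, so Proposition~\ref{lem1} will be available for actions induced by algebraic actions of $S^{\mathbb{C}}$.

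Next I would set up the appropriate variety. Let $H$ be the Zariski closure of $AN$ in $G$ as in the previous proof, so that $H$ is a finite extension of $AN$ and $G/H$ is compact, and consider the action of $S$ on $G/H$. This action is induced by an algebraic action of the complexification over $\mathbb{R}$, so Proposition~\ref{lem1} yields an element $\xi_0 \in G$ for which the orbit $S\xi_0 H/H$ is closed, hence compact since $G/H$ is compact. The crucial structural input is now that $S\xi_0 H/H$, being an orbit of a connected solvable group with all real eigenvalues acting on a real flag-type variety, must be contractible: a real triangulable group acting on such a space has cells as orbits, exactly as the unipotent case gives cells in the proof of Proposition~\ref{prop1}. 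A compact cell is a point, so $\xi_0^{-1} S \xi_0 \subset H$.

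Finally I would pass from $H$ back to $AN$. Since $H$ is a finite extension of $AN$, the image of $\xi_0^{-1} S \xi_0$ in the finite group $H/AN$ is a connected — hence trivial — subgroup, forcing $\xi_0^{-1} S \xi_0 \subset AN$, which is the desired conclusion. The main obstacle I anticipate is justifying the cell structure of the orbit in the solvable case rather than the strictly unipotent case: for unipotent groups the orbits are affine spaces by a standard argument, but for a solvable group with a nontrivial $A$-part one must argue that the hypothesis of \emph{real} eigenvalues (no nontrivial rotational/toral part over $\mathbb{R}$) is what prevents a positive-dimensional compact orbit, for instance by exhibiting $S$ as a semidirect product $S_A \ltimes S_N$ with $S_A$ diagonalizable with positive real eigenvalues and $S_N$ unipotent, so that the orbit map is a homeomorphism onto a cell. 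Making this cell argument precise — and checking that the real-eigenvalue condition is genuinely used here and not merely in the final identification with $AN$ — is the step that will require the most care.
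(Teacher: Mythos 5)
Your outline — act on $G/H$, get a closed (hence compact) orbit from Proposition \ref{lem1}, show the orbit is a cell and therefore a point, then drop from $H$ to its identity component $AN$ by connectedness of $S$ — is exactly the paper's, and your final step is fine. But the two steps you treat most casually are precisely where the gaps are. First, your claim that $S$ is algebraic in the paper's sense (``its complexification is a connected solvable algebraic group, being generated by the exponentials of a triangulable real Lie algebra'') is false. Take $S\,=\,\{\mathrm{diag}(e^{t},\, e^{\sqrt{2}\,t},\, e^{-(1+\sqrt{2})t})\,:\,t\,\in\,\mathbb{R}\}\,\subset\,\mathrm{SL}(3,\mathbb{R})$: it is connected, abelian, with all eigenvalues real and positive, yet the connected complex group with Lie algebra $\text{Lie}(S)+\sqrt{-1}\cdot\text{Lie}(S)$ is a one-parameter subgroup of the maximal torus that is not Zariski closed (its Zariski closure is the whole two-dimensional torus). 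Exponentiating a triangulable algebra yields an algebraic group only in the unipotent case. The paper sidesteps this by \emph{replacing} $S$ with the identity component of the group of real points of its Zariski closure in $\mathrm{GL}(V\otimes{\mathbb C})$; this larger group is real algebraic, still connected and solvable with positive real eigenvalues (by Lie's theorem over $\mathbb R$ one may take $S$ upper triangular with positive diagonal entries, and the Zariski closure of such a group consists of such matrices), and conjugating the larger group into $AN$ a fortiori conjugates $S$ into $AN$.

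Second, the cell property of the orbits, which you rightly identify as the crux, is left open in your proposal, and the sketch you offer would not close it. An orbit is $S/S_{x}$ for a stabilizer $S_{x}$, so ``the orbit map is a homeomorphism onto a cell'' presupposes control of the stabilizer; the real danger is a \emph{disconnected} (for instance discrete) stabilizer, since a quotient of a contractible solvable group by a discrete subgroup can be compact of positive dimension (think of ${\mathbb R}/{\mathbb Z}$, or solvmanifolds generally), and a decomposition $S_A\ltimes S_N$ says nothing about this by itself. This is exactly where positivity of the eigenvalues enters the paper's proof: citing \cite[p. 449]{HN}, the real algebraic group $S$ is topologically a cell, and every algebraic subgroup of $S$ --- in particular every stabilizer of an algebraic action --- is \emph{connected} because all elements of $S$ have positive eigenvalues (a nontrivial component group would force nontrivial elements of finite order, whose eigenvalues are roots of unity); and a simply connected solvable group modulo a closed connected subgroup is again a cell. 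So the missing idea is connectedness of stabilizers, not contractibility of $S$ itself; once you add the Zariski-closure reduction and this stabilizer argument, your proof coincides with the paper's.
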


\begin{proof}
By considering the Zariski closure of $S$ in $\text{GL}(V\otimes{\mathbb C})$
and taking the connected component, containing the identity element, of the
group of its real points, we may assume that $S$ is a real algebraic solvable 
group. By \cite[p. 449]{HN}, the group $S$ is then topologically a cell. 
Moreover, algebraic subgroups of $S$ are all connected because all the 
eigenvalues of all elements of $S$ are positive. Therefore, by \cite[p. 449]{HN}, 
the orbits of $S$ under an algebraic action are cells. One now argues as in the 
proof of Proposition \ref{prop1} to conclude that the group $S$ is conjugate to a
subgroup of $AN$.
\end{proof}

To determine maximal connected solvable algebraic groups of $G$, one notices
that the unipotent part is normalized by the semisimple elements in the
solvable group. Specific detailed information is in the basic papers of
Patera-Winternitz-Zassenhaus \cite{PWZ1}, \cite{PWZ2} and in
Snobel-Winternitz \cite{SW}.

\medskip
\noindent
\textbf{Acknowledgements.}\, The authors are grateful to Professor Ernest 
Vinberg and Professor Karl-Hermann Neeb for their critical comments and 
suggestions that led to substantial improvements of the manuscript. The 
first-named author thanks KFUPM for funding Research Project IN101026.

\end{document}